\begin{document}

\title{Cohesive Powers of Linear Orders}
\author{Rumen Dimitrov\inst{1} \and Valentina Harizanov\inst{2} \and Andrey
Morozov\inst{3} \and Paul Shafer\inst{4} \and Alexandra Soskova\inst{5} \and Stefan Vatev\inst{6}}
\institute{Department of Mathematics, Western Illinois University, Macomb, IL 61455, USA \email{rd-dimitrov@wiu.edu} \and
Department of Mathematics, George Washington University,  Washington, DC 20052, USA \email{harizanv@gwu.edu} \and
Sobolev Institute of Mathematics,  Novosibirsk, 630090, Russia \email{morozov@math.nsc.ru} \and
School of Mathematics, University of Leeds, Leeds LS2 9JT, United Kingdom \email{P.E.Shafer@leeds.ac.uk} \and
Faculty of Mathematics and Informatics, Sofia University, 5 James Bourchier blvd., 1164, Sofia, Bulgaria  \email{asoskova@fmi.uni-sofia.bg} \and
Faculty of Mathematics and Informatics, Sofia University, 5 James Bourchier blvd., 1164, Sofia, Bulgaria  \email{stefanv@fmi.uni-sofia.bg}
\thanks{The first three and the last two authors acknowledge partial support of the NSF grant DMS-1600625. The second author acknowledges support from the Simons Foundation Collaboration Grant, and from CCFF and Dean's Research Chair GWU awards. The last two authors acknowledge support from BNSF, MON, DN 02/16. The fourth author acknowledges the support of the \emph{Fonds voor Wetenschappelijk Onderzoek -- Vlaanderen} Pegasus program.}}
\thanks{MSC-class: 03C57 (Primary) 03D45, 03C20 (Secondary)}

\maketitle

\begin{abstract}
Cohesive powers of computable structures can be viewed as effective
ultraproducts over effectively indecomposable sets called cohesive
sets. We investigate the isomorphism types of cohesive powers $\Pi _{C}%
\mathcal{L}$ for familiar computable linear orders $\mathcal{L}$. If $%
\mathcal{L}$ is isomorphic to the ordered set of natural numbers $\mathbb{N}$
and has a computable successor function, then $\Pi _{C}\mathcal{L}$ is
isomorphic to $\mathbb{N}+\mathbb{Q}\times \mathbb{Z}.$ Here, $+$ stands for
the sum and $\times $ for the lexicographical product of two orders. We
construct computable linear orders $\mathcal{L}_{1}$ and $\mathcal{L}_{2}$
isomorphic to $\mathbb{N},$ both with noncomputable successor functions,
such that $\Pi _{C}\mathcal{L}_{1}\mathbb{\ }$is isomorphic to $\mathbb{N}+%
\mathbb{Q}\times \mathbb{Z}$, while $\Pi _{C}\mathcal{L}_{2}$ is not$.$
While cohesive powers preserve all $\Pi _{2}^{0}$ and $\Sigma _{2}^{0}$ sentences, we provide new
examples of $\Pi _{3}^{0}$ sentences $\Phi $ and computable structures $%
\mathcal{M}$ such that $\mathcal{M}\vDash \Phi $ while $\Pi _{C}\mathcal{M}%
\vDash \urcorner \Phi .$
\end{abstract}

\section{Introduction and Preliminaries}

Skolem was the first to construct a countable nonstandard model of true
arithmetic. Various countable nonstandard models of (fragments of)
arithmetic have been later studied by Feferman, Scott, Tennenbaum,
Hirschfeld, Wheeler, Lerman, McLaughlin and others (see \cite{FST}, \cite{L}%
, \cite{HW}, \cite{M}). The following definition, and other notions from
computability theory can be found in \cite{RS}.

\begin{definition}
(i) An infinite set $C\subseteq \omega $ is \emph{cohesive} (\emph{r-cohesive%
}) if for every c.e.\ (computable) set $W,$ either $W\cap C$ or $\overline{W}%
\cap C$ is finite.

(ii) A set $M$ is \emph{maximal} (\emph{r-maximal}) if $M$ is c.e.\ and $%
\overline{M}$ is cohesive (\emph{r}-cohesive).

(iii) If  $M$ is maximal, then $\overline{M}$ is called co-maximal.

(iv) A set $B$ is \emph{quasimaximal} if it is the intersection of finitely
many maximal sets.
\end{definition}

In the definition above $\omega $ denotes the set of natural numbers. We
will use $=^{\ast }$(and $\subseteq ^{\ast })$ to refer to equality
(inclusion) of sets up to finitely many elements. Let $A$ be a fixed \emph{r}%
-cohesive set. For computable functions $f$ and $g,$\ Feferman, Scott, and
Tennenbaum (see \cite{FST}) defined an equivalence relation $f\sim _{A}g$ if
$A\subseteq ^{\ast }\{n:f(n)=g(n)\}.$ They then proved that the structure $%
\mathcal{R}/\!\sim_{A},$ with domain the set of recursive functions modulo $%
\sim _{A},$ is a model of only a fragment of arithmetic. They constructed a
particular $\Pi _{3}^{0}$ sentence $\Phi $ such that for the standard model of
arithmetic, $\mathcal{N},$ we have $\mathcal{N}\vDash \Phi $ but $\mathcal{R}/\!\sim_{A}\nvDash \Phi . $ The
sentence $\Phi $ provided in \cite{FST} essentially uses Kleene's $T$
predicate.

\emph{Cohesive powers }of computable structures are effective versions of
ultrapowers. They have been introduced in \cite{D2} in relation to the study
of automorphisms of the lattice $\mathcal{L}^{\ast }(V_{\infty })$ of
effective vector spaces$.$ Cohesive powers of the field of rational numbers
were used in \cite{D1} to characterize certain principal filters of $%
\mathcal{L}^{\ast }(V_{\infty })$. Their isomorphism types and automorphisms
were further studied in \cite{DHMM}$.$ They were also used in \cite{D1} and %
\cite{DH} to find interesting orbits in $\mathcal{L}^{\ast }(V_{\infty })$.

The goal of this paper is to show that the presentation of a computable
structure matters for the isomorphism type of its cohesive power. We give
computable presentations of the ordered set of natural numbers such that
their cohesive powers are not elementary equivalent. Furthermore, we provide
examples of computable structures $\mathcal{M}$ and $\Pi _{3}^{0}$ sentences
$\Psi ,$ which do not use Kleene's $T$ predicate, such that $\mathcal{M}%
\vDash \Psi $ while the cohesive power $\Pi _{C}\mathcal{M}\vDash \urcorner
\Psi .$ We will now present some additional definitions and known results.

\begin{definition}
\cite{D2} Let $\mathcal{A}$ be a computable structure for a computable
language $L$ and with domain $A$. Let $C\subseteq \omega $ be a cohesive
set. The \emph{cohesive power of }$\mathcal{A}$\emph{\ over }$C$, denoted by
$\Pi _{C}\mathcal{A}$, is a structure $\mathcal{B}$ for $L$ defined as
follows:

(i) Let $D=\{\varphi \mid \varphi :\omega \rightarrow A$ is a partial
computable function, and $C\subseteq ^{\ast }dom(\varphi )\}$.

For $\varphi _{1},\varphi _{2}\in D$, define $\varphi _{1}=_{C}\varphi
_{_{2}}$ iff $C\subseteq ^{\ast }\{x:\varphi _{1}(x){\downarrow} =\varphi
_{2}(x){\downarrow} \}$.

Let $B=(D/=_{C})$ be the domain of $\mathcal{B=}\Pi _{C}\mathcal{A}$

(ii) If $f\in L$ is an $n$-ary function symbol, then $f^{\mathcal{B}}$ is an
$n$-ary function on $B$ such that for every $[\varphi _{1}],\ldots ,[\varphi
_{n}]\in B,$ $f^{\mathcal{B}}([\varphi _{1}],\ldots ,[\varphi
_{n}])=[\varphi ]$, where for every $x\in A$,
\begin{equation*}
\varphi (x)\simeq f^{\mathcal{A}}(\varphi _{1}(x),\ldots ,\varphi _{n}(x)),
\end{equation*}%
where $\simeq $ stands for equality of partial functions.

(iii) If $P\in L$ is an $m$-ary predicate symbol, then $P^{\mathcal{B}}$ is
an $m$-ary relation on $B$ such that for every $[\varphi _{1}],\ldots
,[\varphi _{m}]\in B$,
\begin{equation*}
P^{\mathcal{B}}([\varphi _{1}],\ldots ,[\varphi _{m}]) \Leftrightarrow
C\subseteq ^{\ast }\{x\in A\mid P^{\mathcal{A}}(\varphi _{1}(x),\ldots
,\varphi _{m}(x))\}.
\end{equation*}%
$\smallskip $(iv) If $c\in L$ is a constant symbol, then $c^{\mathcal{B}}$
is the equivalence class of the (total) computable function on $A$ with
constant value $c^{\mathcal{A}}$.
\end{definition}

The following is the fundamental theorem of cohesive powers due to Dimitrov (see \cite{D2}).

\begin{theorem}
\label{FTCP} Let $C$ be  a cohesive set and let $\mathcal{A}$ and $\mathcal{B}$ be as in the definition
above.

\begin{enumerate}
\item If $\tau (y_{1},\ldots ,y_{n})$ is a term in $L$ and $[\varphi
_{1}],\ldots ,[\varphi _{n}]\in B,$ then $[\tau ^{\mathcal{B}}([\varphi
_{1}],\ldots ,[\varphi _{n}])]$ is the equivalence class of a partial
computable function such that
\begin{equation*}
\tau ^{\mathcal{B}}([\varphi _{1}],\ldots ,[\varphi _{n}])(x)=\tau ^{%
\mathcal{A}}(\varphi _{1}(x),\ldots ,\varphi _{n}(x)).
\end{equation*}

\item If $\Phi (y_{1},\ldots ,y_{n})$ is a formula in $L$ that is a Boolean
combination of $\Sigma _{1}^{0}$ and $\Pi _{1}^{0}$ formulas and $[\varphi
_{1}],\ldots ,[\varphi _{n}]\in B,$ then%
\begin{equation*}
\mathcal{B}\vDash \Phi ([\varphi _{1}],\ldots ,[\varphi _{n}])\text{ iff }%
C\subseteq ^{\ast }\{x:\mathcal{A}\models \Phi (\varphi _{1}(x),\ldots
,\varphi _{n}(x))\}.
\end{equation*}

\item If $\Phi $ is a $\Pi _{2}^{0}$ (or $\Sigma _{2}^{0})$ sentence in $L$,
then $\mathcal{B}\vDash \Phi $ iff $\mathcal{A}\vDash \Phi .\smallskip $

\item If $\Phi $ is a $\Pi _{3}^{0}$ sentence in $L$, then $\mathcal{B}%
\vDash \Phi $ implies $\mathcal{A}\vDash \Phi .$\medskip
\end{enumerate}
\end{theorem}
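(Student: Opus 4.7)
The plan is to prove the four parts in order, since each builds on the previous ones.

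For Part 1, I would proceed by induction on the complexity of the term $\tau$. The base cases are variables (handled by the hypothesis that each $[\varphi_i]$ is a class of a partial computable function) and constants (handled by clause (iv) of the definition). For the inductive step $\tau = f(\tau_1, \ldots, \tau_k)$, the induction hypothesis gives partial computable representatives $\psi_1, \ldots, \psi_k$ for the subterm evaluations; then $x \mapsto f^{\mathcal{A}}(\psi_1(x), \ldots, \psi_k(x))$ is partial computable because $\mathcal{A}$ is a computable structure, and it represents $\tau^{\mathcal{B}}([\varphi_1], \ldots, [\varphi_n])$ by the functional clause (ii) of the definition. This is mostly bookkeeping.

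For Part 2, I would first record the key consequence of cohesiveness: for any c.e.\ set $W$, either $C \subseteq^* W$ or $C \subseteq^* \overline{W}$, and likewise if $W$ is co-c.e. Write $S_\Phi := \{x : \mathcal{A} \models \Phi(\varphi_1(x), \ldots, \varphi_n(x))\}$. I would then induct on formula complexity. The atomic case follows from clause (iii) of the definition combined with Part 1 (reducing $P(\tau_1, \ldots, \tau_m)$ to a predicate on equivalence classes of partial computable functions). For $\Sigma_1^0$ formulas $\exists \bar z\,\psi(\bar y, \bar z)$ with $\psi$ quantifier-free, $S_\Phi$ is c.e., so by cohesiveness either $C \subseteq^* S_\Phi$ or $C \subseteq^* \overline{S_\Phi}$. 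If $C \subseteq^* S_\Phi$, I would build partial computable Skolem functions $\psi_1, \ldots, \psi_k$ by searching, on input $x$, for the least witness tuple; these are defined on almost all of $C$ and provide the required elements of $B$ witnessing $\mathcal{B} \models \Phi$. The converse uses Part 1 to pull back a witness $[\psi_1], \ldots, [\psi_k] \in B$ to an almost-cofinite subset of $C$. Boolean combinations reduce to the observation that $S_{\Phi \wedge \Psi} = S_\Phi \cap S_\Psi$, $S_{\neg \Phi} = \overline{S_\Phi}$, and each subformula in the combination is $\Sigma_1^0$ or $\Pi_1^0$, so $C$ sits almost entirely on one side of each $S_{\Phi_i}$; consequently the Boolean structure on satisfaction in $\mathcal{B}$ agrees with the Boolean structure on "$C \subseteq^*$\,\," conditions. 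I expect this $\Sigma_1^0$ case, and in particular the Skolem construction, to be the main obstacle.

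For Part 3, write a $\Pi_2^0$ sentence as $\forall \bar y\, \Psi(\bar y)$ with $\Psi$ in $\Sigma_1^0$. If $\mathcal{A} \models \Phi$, then for any $[\varphi_1], \ldots, [\varphi_n] \in B$ the set $S_{\Psi}$ almost contains $\bigcap_i \mathrm{dom}(\varphi_i)$, hence almost contains $C$, so Part 2 gives $\mathcal{B} \models \Psi([\bar\varphi])$. Conversely, if $\mathcal{A} \not\models \Phi$, a counterexample $\bar a$ gives, via constant functions $\varphi_i \equiv a_i$, a class in $B$ with $S_{\Psi} = \emptyset$, so Part 2 yields $\mathcal{B} \not\models \Psi([\bar\varphi])$. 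The $\Sigma_2^0$ case is symmetric via negation. For Part 4, write the $\Pi_3^0$ sentence as $\forall \bar y\, \exists \bar z\, \Theta(\bar y, \bar z)$ with $\Theta$ in $\Pi_1^0$, and suppose $\mathcal{B} \models \Phi$. Given $\bar a \in A$, apply $\mathcal{B} \models \Phi$ to the constant classes $[\bar a]$ to obtain witnesses $[\psi_1], \ldots, [\psi_k] \in B$ for the inner existential. By Part 2, the $\Pi_1^0$ set $S_\Theta$ (computed with $\varphi_i \equiv a_i$ and the $\psi_j$) almost contains $C$, and is therefore nonempty; any $x$ in it yields witnesses $\psi_j(x) \in A$ showing $\mathcal{A} \models \exists \bar z\, \Theta(\bar a, \bar z)$. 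Since $\bar a$ was arbitrary, $\mathcal{A} \models \Phi$.
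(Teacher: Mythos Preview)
The paper does not actually prove this theorem; it is stated as the ``fundamental theorem of cohesive powers due to Dimitrov'' with a citation to \cite{D2}, and no argument is given in the present paper. So there is nothing here to compare your proposal against.

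That said, your outline is correct and is the standard route: induction on term complexity for Part~1; for Part~2, the dichotomy $C\subseteq^{\ast}S_{\Phi}$ or $C\subseteq^{\ast}\overline{S_{\Phi}}$ for each $\Sigma_{1}^{0}$ or $\Pi_{1}^{0}$ constituent (since each such $S_{\Phi}$ is c.e.\ or co-c.e.), together with partial computable Skolem witnesses obtained by unbounded search; and for Parts~3 and~4, instantiation with constant functions from the canonical embedding to transfer truth between $\mathcal{A}$ and $\mathcal{B}$. One small organizational point: in Part~2 you invoke the quantifier-free case inside your treatment of $\Sigma_{1}^{0}$ formulas (to conclude $\mathcal{B}\models\psi([\bar{\varphi}],[\bar{\psi}])$ from the Skolem construction), but you only explicitly handled the atomic case before that. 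The missing step---Boolean combinations of atomic formulas---is of course covered by the same Boolean-combination argument you give afterwards (atomic $S$-sets being decidable on $\bigcap_{i}\mathrm{dom}(\varphi_{i})$), so you should just reorder: atomic, then quantifier-free, then $\Sigma_{1}^{0}/\Pi_{1}^{0}$, then general Boolean combinations of the latter.
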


Note that $\mathcal{A}$ is a substructure of $\mathcal{B=}\Pi _{C}\mathcal{A}
$. For $c\in A$ let $[\varphi _{c}]\in B$ be the equivalence class of the
total function $\varphi _{c}$ such that $\varphi _{c}(x)=c$ for every $x\in
\omega $. The map $d:A\rightarrow B$ such that $d(c)=[\varphi _{c}]$ is
called \emph{canonical embedding} of $\mathcal{A}$ into $\mathcal{B}$.

\section{Cohesive Powers of Linear Orders}

We will now investigate various algebraic and computability-theoretic
properties of cohesive powers of linear orders. We first provide some
definitions and notational conventions we will use. Let $C\subseteq \omega $
be a cohesive set. Let $\left\langle \cdot ,\cdot \right\rangle :\omega
^{2}\rightarrow \omega $ be a fixed computable bijection, and let the
(computable) functions $\pi _{1}$ and $\pi _{2}$ be such that $\pi
_{1}(\left\langle m,n\right\rangle )=m$ and $\pi _{2}(\left\langle
m,n\right\rangle )=n.$

\begin{definition}
\label{product_LO}Let $\mathcal{L}_{0}=\left\langle L_{0},<_{\mathcal{L}%
_{0}}\right\rangle $ and $\mathcal{L}_{1}=\left\langle L_{1},<_{\mathcal{L}%
_{1}}\right\rangle $ be linear orders. Then
\end{definition}

(1) $\mathcal{L}_{0}+\mathcal{L}_{1}=\left\langle \{\left\langle
0,l\right\rangle :l\in L_{0}\}\cup \{\left\langle 1,l\right\rangle :l\in
L_{1}\},<_{\mathcal{L}_{0}+\mathcal{L}_{1}}\right\rangle ,$ where

\begin{equation*}
\left\langle i,l\right\rangle <_{\mathcal{L}_{0}+\mathcal{L}%
_{1}}\left\langle j,m\right\rangle \ \text{iff\ }\left( i<j\right) \vee
\left( i=j\wedge l<_{\mathcal{L}_{i}}m\right) .
\end{equation*}

(2) $\mathcal{L}_{0}\times \mathcal{L}_{1}=\left\langle L_{0}\times L_{1},<_{%
\mathcal{L}_{0}\times \mathcal{L}_{1}}\right\rangle ,$ where

$\left\langle k,m\right\rangle <_{\mathcal{L}_{0}\times \mathcal{L}%
_{1}}\left\langle l,n\right\rangle $ iff $\left( k<_{\mathcal{L}%
_{0}}l\right) \vee \left( k=_{\mathcal{L}_{0}}l\wedge m<_{\mathcal{L}%
_{1}}n\right) .$

\begin{remark}
(1) By $\mathbb{N}$, $\mathbb{Z}$, and $\mathbb{Q}$ we denote the usual
ordered sets of natural numbers, integers, and rational numbers. The order
types of $\mathbb{N}$, $\mathbb{Z}$, and $\mathbb{Q}$ are denoted by $\omega
,$ $\zeta $, and $\eta .$

(2) In Definition \ref{product_LO} we use $\mathcal{L}_{0}\times \mathcal{L}%
_{1}$ to denote the lexicographical product of the linear orders $\mathcal{L}%
_{0}$ and $\mathcal{L}_{1}.$ This product is also denoted by $\mathcal{L}%
_{1}\cdot \mathcal{L}_{0}.$ (For example, $\mathbb{Q}\times \mathbb{Z}$ is
also denoted by $\mathbb{Z\cdot Q}$, and its order type is denoted $\zeta
\cdot \eta .$)

(3) We will use $\mathcal{L}^{rev}$ to denote the reverse linear order of $%
\mathcal{L}$. (In the literature it is also denoted by $\mathcal{L}^{\ast }$%
.)

(4) Let the quantifier $\forall ^{\infty }n$ stand for ``infinitely many $n.$'' Note that if $\{n \mid \varphi (n)\}$ is a c.e.~set, then $\left( \forall ^{\infty }n\in C\right) \left[ \varphi (n)\right] $ will mean that $\varphi (n)$ is satisfied ''for almost all $n\in C$.''

\end{remark}

Before we state the next theorem, we would like to remind that $\mathbb{N} +
\mathbb{Q} \times \mathbb{Z}$ is the order type of a countable non-standard
model of PA.

\begin{theorem}
\label{Properties_LO}Let $\mathcal{L}_{0}$ and $\mathcal{L}_{1}$ be
computable linear orders and let $C$ be a cohesive set. Then

(1) $\Pi _{C}\left( \mathcal{L}_{0}+\mathcal{L}_{1}\right) \cong \Pi _{C}%
\mathcal{L}_{0}+\Pi _{C}\mathcal{L}_{1}$

(2) $\Pi _{C}\left( \mathcal{L}_{0}\times \mathcal{L}_{1}\right) \cong \Pi
_{C}\mathcal{L}_{0}\times \Pi _{C}\mathcal{L}_{1}$

(3) $\Pi _{C}\mathcal{L}_{0}^{rev}\cong \left( \Pi _{C}\mathcal{L}%
_{0}\right) ^{rev}$

(4) Let $\mathcal{A}$ be a computable presentation of the linear order $%
\mathbb{N}$ with a computable successor function. Then $\Pi _{C}\mathcal{A}%
\cong \mathbb{N+Q}\times \mathbb{Z}.$

(5) If $\mathcal{L}$ is a computable dense linear order without endpoints,
then $\mathcal{L}\cong \Pi _{C}\mathcal{L}.$
\end{theorem}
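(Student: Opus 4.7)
Parts (1)--(3) are algebraic identities that I would establish by explicit maps. For (1), given $[\varphi]\in\Pi_C(\mathcal{L}_0+\mathcal{L}_1)$ the set $\{n:\pi_1(\varphi(n))=0\}$ is c.e., so by cohesiveness exactly one of it or its complement is cofinite in $C$, and this decides whether the image of $[\varphi]$ lies in the left or right summand of $\Pi_C\mathcal{L}_0+\Pi_C\mathcal{L}_1$; the representative on that side is $[\pi_2\circ\varphi]$. For (2), the natural map $[\varphi]\mapsto([\pi_1\circ\varphi],[\pi_2\circ\varphi])$ is a bijection (its inverse pairs componentwise representatives), and order preservation follows by applying cohesiveness to the c.e.\ set $\{n:\pi_1(\varphi(n))<_{\mathcal{L}_0}\pi_1(\psi(n))\}$ to separate the two lexicographic cases. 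For (3) the underlying sets of partial computable representatives are literally the same and the order is reversed on the nose, so the identity map works.

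Part (5) is nearly immediate from Theorem~\ref{FTCP}. The structure $\Pi_C\mathcal{L}$ is countable because only countably many partial computable functions exist; the linear-order axioms are universal; and both density and absence of endpoints are $\Pi_2^0$ sentences, hence by Theorem~\ref{FTCP}(3) they transfer from $\mathcal{L}$ to $\Pi_C\mathcal{L}$. Cantor's back-and-forth construction then delivers $\mathcal{L}\cong\Pi_C\mathcal{L}$.

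For part (4), the plan is to show that the canonical embedding realizes the initial $\mathbb{N}$-segment and that the non-canonical elements form a union of $\mathbb{Z}$-chains whose quotient is a countable dense linear order without endpoints, hence $\mathbb{Q}$. First, I would prove that $[\varphi]$ lies outside the canonical image precisely when $\varphi(n)$ grows unboundedly along $C$, in the sense that for every $k$ the set $\{n\in C:\varphi(n)>_\mathcal{A} S^k(0^\mathcal{A})\}$ is cofinite in $C$; this is a standard cohesiveness argument applied to the c.e.\ sets $\{n:\varphi(n)=c\}$ and $\{n:\varphi(n)\leq_\mathcal{A} c\}$ for each $c\in A$. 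Second, since $S$ is computable, a predecessor function $P$ is partial computable on $A\setminus\{0^\mathcal{A}\}$ (search for the unique $b$ with $S(b)=a$), so every non-canonical $[\varphi]$ sits on a bi-infinite chain $\{[S^k\circ\varphi]:k\in\mathbb{Z}\}$ of distinct classes, and the $\Pi_1^0$ fact ``nothing lies strictly between $a$ and $S(a)$'' in $\mathcal{A}$ forces $[S\circ\varphi]$ to be the immediate successor of $[\varphi]$ in $\Pi_C\mathcal{A}$ by Theorem~\ref{FTCP}(2).

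To finish (4) I would show the quotient by the ``same $\mathbb{Z}$-chain'' relation on non-canonical classes is a countable dense linear order without endpoints. For density, given non-canonical $[\varphi]<[\psi]$ in different chains, the pointwise $\mathcal{A}$-distance $d(n)$ from $\varphi(n)$ to $\psi(n)$ is computable (iterate $S$), and the hypothesis that the chains differ gives $d(n)\to\infty$ along $C$; then $\chi(n):=S^{\lfloor d(n)/2\rfloor}(\varphi(n))$ yields $[\chi]$ strictly between $[\varphi]$ and $[\psi]$ in a third chain, since both $\lfloor d(n)/2\rfloor$ and $d(n)-\lfloor d(n)/2\rfloor$ are unbounded on $C$. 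The absence of endpoints in the quotient is handled by similar midpoint constructions using the distance from $0^\mathcal{A}$ to $\varphi(n)$, halved to produce a smaller chain and doubled to produce a larger one; at least one non-canonical class exists because the identity function on $\omega$ has $\mathcal{A}$-unbounded values on $C$. The main obstacle I anticipate is verifying that the midpoint class genuinely lies in a chain distinct from both endpoints; this is the payoff of choosing representatives computably via $S$ together with the cohesiveness derivation of $d(n)\to\infty$ from the negation of ``same chain''.
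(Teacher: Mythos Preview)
Your proposal is correct and matches the paper's approach throughout. Parts~(1)--(3) and~(5) are handled by exactly the maps and the $\Pi_2^0$-transfer/categoricity argument the paper uses; for part~(4) the paper actually omits the proof and refers forward to Theorem~\ref{noncomputable_succ}, and your sketch (canonical image as initial $\omega$-segment, computable successor and predecessor giving $\mathbb{Z}$-blocks, midpoint-via-halved-distance for density, and doubling for unboundedness of blocks) is precisely the strategy carried out there.
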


\begin{proof}
(1) Let $\mathcal{A}=\Pi _{C}\left( \mathcal{L}_{0}+\mathcal{L}_{1}\right) $
and $\mathcal{B}=\Pi _{C}\mathcal{L}_{0}+\Pi _{C}\mathcal{L}_{1}.$ We will
define an isomorphism $\Phi :\mathcal{A}\rightarrow \mathcal{B}.$ Suppose $%
\left[ \varphi \right] _{C}\in \Pi _{C}\left( \mathcal{L}_{0}+\mathcal{L}%
_{1}\right) $ for a partial computable function $\varphi $.$\smallskip $

If $\left( \forall ^{\infty }n\in C\right) \left[ \varphi (n)\in \left\{
0\right\} \times L_{1}\right] $, then let $\Phi (\left[ \varphi \right]
_{C})=_{def}\left\langle 0,\left[ \pi _{2}\circ \varphi \right]
_{C}\right\rangle .\smallskip $

If $\left( \forall ^{\infty }n\in C\right) \left[ \varphi (n)\in \left\{
1\right\} \times L_{2}\right] ,$ then let $\Phi (\left[ \varphi \right]
_{C})=_{def}\left\langle 1,\left[ \pi _{2}\circ \varphi \right]
_{C}\right\rangle .\smallskip $

Since $C$ is cohesive, exactly one of the two cases above applies, so it
follows that so it follows that $\Phi$ is well defined. It is then easy to
check that $\Phi$ is an isomorphism.\medskip

(2) Let $\mathcal{A}=\Pi _{C}\left( \mathcal{L}_{0}\times \mathcal{L}%
_{1}\right) $ and $\mathcal{B}=\Pi _{C}\mathcal{L}_{0}\times \Pi _{C}%
\mathcal{L}_{1}.$ We will define an isomorphism $\Phi :\mathcal{A}%
\rightarrow \mathcal{B}.$ Suppose $\left[ \varphi \right] _{C}\in \Pi
_{C}\left( \mathcal{L}_{0}\times \mathcal{L}_{1}\right) ,$ and let $\Phi (%
\left[ \varphi \right] _{C})=_{def}\left\langle \left[ \pi _{1}\circ \varphi %
\right] _{C},\left[ \pi _{2}\circ \varphi \right] _{C}\right\rangle .$ We
will prove that\
\begin{equation*}
\left[ \varphi \right] _{C}<_{\mathcal{A}}\left[ \psi \right]
_{C}\Leftrightarrow \left\langle \left[ \pi _{1}\circ \varphi \right] _{C},%
\left[ \pi _{2}\circ \varphi \right] _{C}\right\rangle <_{\mathcal{B}%
}\left\langle \left[ \pi _{1}\circ \psi \right] _{C},\left[ \pi _{2}\circ
\psi \right] _{C}\right\rangle .
\end{equation*}%
By definition, $\left[ \varphi \right] _{C}<_{\mathcal{A}}\left[ \psi \right]
_{C}$ iff $C\subseteq ^{\ast }\{n:\varphi (n)<\psi (n)\}.$ By cohesiveness
of $C,$ we will have either\smallskip

$\left( \forall ^{\infty }n\in C\right) [\left( \pi _{1}\circ \varphi
\right) (n)<\left( \pi _{1}\circ \psi \right) (n)],$ or

$\left( \forall ^{\infty }n\in C\right) \left[ \left( \pi _{1}\circ \varphi
\right) (n)=\left( \pi _{1}\circ \psi \right) (n)\wedge \left( \pi _{2}\circ
\varphi \right) (n)<\left( \pi _{2}\circ \psi \right) (n)\right] .$

\noindent In the first case, $\left[ \pi _{1}\circ \varphi \right]
_{C}<_{\Pi _{C}\mathcal{L}_{0}}\left[ \pi _{1}\circ \psi \right] _{C}.$ In
the second case, $\left[ \pi _{1}\circ \varphi \right] _{C}=_{\Pi _{C}%
\mathcal{L}_{0}}\left[ \pi _{1}\circ \psi \right] _{C}$ and $\left[ \pi
_{2}\circ \varphi \right] _{C}<_{\Pi _{C}\mathcal{L}_{1}}\left[ \pi
_{2}\circ \psi \right] _{C}.$ Therefore,
\begin{equation*}
\left\langle \left[ \pi _{1}\circ \varphi \right] _{C},\left[ \pi _{2}\circ
\varphi \right] _{C}\right\rangle <_{\mathcal{B}}\left\langle \left[ \pi
_{1}\circ \psi \right] _{C},\left[ \pi _{2}\circ \psi \right]
_{C}\right\rangle .
\end{equation*}

(3) Let $\mathcal{A}=\Pi _{C}\mathcal{L}_{0}^{rev}$ and $\mathcal{B}=\left(
\Pi _{C}\mathcal{L}_{0}\right) ^{rev}.$ We will define an isomorphism $\Phi :%
\mathcal{A}\rightarrow \mathcal{B}.$ If $\left[ \varphi \right] _{C}\in \Pi
_{C}\mathcal{L}_{0}^{rev},$ then let $\Phi \left( \left[ \varphi \right]
_{C}\right) =\left[ \varphi \right] _{C}.$ We will prove that $\left[
\varphi \right] _{C}<_{\mathcal{A}}\left[ \psi \right] _{C}$ iff $\left[
\varphi \right] _{C}<_{\mathcal{B}}\left[ \psi \right] _{C}.$ By definition
we have
\begin{equation*}
\left[ \varphi \right] _{C}<_{\mathcal{B}}\left[ \psi \right]
_{C}\Leftrightarrow \left[ \psi \right] _{C}<_{\Pi _{C}\mathcal{L}_{0}}\left[
\varphi \right] _{C}\Leftrightarrow
\end{equation*}%
\begin{equation*}
\left( \forall ^{\infty }n\in C\right) (\psi (n)<_{\mathcal{L}_{0}}\varphi
(n))\Leftrightarrow
\end{equation*}%
\begin{equation*}
\left( \forall ^{\infty }n\in C\right) (\varphi (n)<_{\mathcal{L}%
_{0}^{rev}}\psi (n))\Leftrightarrow \left[ \varphi \right] _{C}<_{\mathcal{A}%
}\left[ \psi \right] _{C}.
\end{equation*}

(4) The proof of this fact is omitted because it is a simplified version of
the proof of Theorem \ref{noncomputable_succ}.

(5) The theory of dense linear orders without endpoints is $\Pi _{2}^{0}$
axiomatizable and countably categorical. By Theorem \ref{FTCP} (part 4), $%
\Pi _{C}\mathcal{L}$ is also a dense linear order without endpoints. Since $%
\Pi _{C}\mathcal{L}$ is countable, we have $\mathbb{Q}\cong \mathcal{L}\cong
\Pi _{C}\mathcal{L}.$
\end{proof}

Item (5) in the previous Theorem provides an example of an infinite
structure $\mathcal{L}$ such that $\mathcal{L}\cong \Pi _{C}\mathcal{L}$.
The linear order $\mathbb{Q}$ is an ultrahomogeneous structure; it is the Fra%
\"{\i}ss\'{e} limit of the class of finite linear orders. The relationship
between Fra\"{\i}ss\'{e} limits and cohesive powers is considered in (\cite%
{DHMSSV}). We now provide two more examples of structures isomorphic to
their cohesive powers.

\begin{example}
(1) $\Pi _{C}\left( \mathbb{Q}\times \mathbb{Z}\right) \cong \mathbb{Q}%
\times \mathbb{Z}$

(2) $\Pi _{C}\left( \mathbb{N+Q}\times \mathbb{Z}\right) \cong \mathbb{N+Q}%
\times \mathbb{Z}$
\end{example}

\begin{proof}
(1) $\Pi _{C}\mathbb{Q}\times \Pi _{C}\mathbb{Z}\cong \mathbb{Q}\times \Pi
_{C}(\mathbb{N}^{rev}\mathbb{+N})\cong \mathbb{Q}\times \left( \Pi _{C}%
\mathbb{N}^{rev}+\Pi _{C}\mathbb{N}\right) \cong $

$\cong \mathbb{Q}\times \lbrack \left( \mathbb{N+Q}\times \mathbb{Z}\right)
^{rev}+\left( \mathbb{N+Q}\times \mathbb{Z}\right) ]\cong \mathbb{Q}\times
\lbrack \mathbb{Q}\times \mathbb{Z+N}^{rev}+\mathbb{N+Q}\times \mathbb{Z}%
]\cong $

$\cong \mathbb{Q}\times \lbrack \mathbb{Q}\times \mathbb{Z+Z+Q}\times
\mathbb{Z}]\cong \mathbb{Q}\times \lbrack \mathbb{Q}\times \mathbb{Z]}\cong
\mathbb{Q}\times \mathbb{Z}\smallskip $

(2) $\Pi _{C}\left( \mathbb{N+Q}\times \mathbb{Z}\right) \cong \Pi _{C}%
\mathbb{N}+\Pi _{C}\left( \mathbb{Q}\times \mathbb{Z}\right) \cong \mathbb{%
N+Q}\times \mathbb{Z+Q}\times \mathbb{Z}\cong \mathbb{N+Q}\times \mathbb{Z}$
\end{proof}

Theorem \ref{Properties_LO}, part (4), demonstrates that having a computable
successor function is a sufficient condition for the cohesive power of a
computable linear order of type $\omega $ to be isomorphic to $\mathbb{N}+%
\mathbb{Q}\times \mathbb{Z}$. The next theorem shows that this condition is
not necessary.

\begin{theorem}
\label{noncomputable_succ}There is a computable linear order $\mathcal{L}$
of order type $\omega $ with a non-computable successor function such that
for every cohesive set $C$ we have $\Pi _{C}\mathcal{L}\cong \mathbb{N}+%
\mathbb{Q}\times \mathbb{Z}$.
\end{theorem}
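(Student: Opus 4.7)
The plan is to build $\mathcal{L}$ by a finite-injury priority construction that diagonalizes against each partial computable candidate for the successor function while confining the whole obstruction to computability to a single c.e.\ set, which cohesiveness can then resolve. We build $<_{\mathcal{L}}$ in stages. At stage $s \geq 1$ we add the new element $s$ to the order already defined on $\{0,\ldots,s-1\}$; by default we append $s$ as the maximum. The requirement $R_e$: $\varphi_e \neq s_{\mathcal{L}}$ uses witness $w_e = e$. If at stage $s$ the requirement $R_e$ has not yet acted and $\varphi_{e,s}(e){\downarrow} = v$ where $v$ is the current $<_{\mathcal{L}}$-successor of $e$, then $R_e$ acts by instead inserting $s$ strictly between $e$ and $v$, permanently rerouting the successor of $e$ to $s$. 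Distinct requirements use distinct witnesses and each acts at most once, so $\mathcal{L}$ has order type $\omega$ and $s_{\mathcal{L}}(e) \neq \varphi_e(e)$ for every $e$, so $s_{\mathcal{L}}$ is non-computable.

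Let $P = \{e : R_e \text{ acted}\}$, a c.e.\ set, and for $e \in P$ let $y_e$ denote the (partial-computable-in-$e$) acting stage of $R_e$. Let $I = \{y_e : e \in P\}$; this is actually computable, since at each stage we can tell whether that stage's element was inserted. The successor function has the two-case form $s_{\mathcal{L}}(x) = x+1$ if $x \notin P$ and $s_{\mathcal{L}}(x) = y_x$ if $x \in P$, and the predecessor has an analogous form involving $I$ and $P$. The $<_{\mathcal{L}}$-rank of a non-inserted element $a$ is $a + |P \cap [0,a-1]|$, so on $\omega \setminus I$ the order $<_{\mathcal{L}}$ agrees with $\omega$-order and $<_{\mathcal{L}}$-distance and $\omega$-distance are comparable up to a factor of two.

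Now fix a cohesive set $C$ and a partial computable $\varphi$ with $[\varphi]_C$ non-standard in $\Pi_C \mathcal{L}$. The set $\{n : \varphi(n) \in P\}$ is c.e., so by cohesiveness of $C$ it meets $C$ either finitely or cofinitely; in the first case set $\varphi^+(n) = \varphi(n)+1$, in the second set $\varphi^+(n) = y_{\varphi(n)}$. Either way $\varphi^+$ is partial computable and realizes the successor of $[\varphi]_C$ in $\Pi_C \mathcal{L}$. A similar case analysis, splitting first on $\varphi(n) \in I$ (computable) and then on $\varphi(n) - 1 \in P$ (c.e.), yields a partial computable predecessor. Hence every non-standard element lies in a $\mathbb{Z}$-chain, and the canonical embedding realizes $\mathcal{L} \cong \mathbb{N}$ as the standard initial segment below all non-standard elements.

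For density and endpointlessness of the $\mathbb{Z}$-chains above $\mathbb{N}$: after a cohesiveness-based reduction to the case where $\varphi(n), \psi(n) \notin I$ a.e.\ on $C$, given non-standard $[\varphi]_C < [\psi]_C$ in distinct chains, the $\omega$-distance $|\psi(n) - \varphi(n)|$ must grow to infinity on $C$ by the comparability of distances, so taking $\chi(n)$ to be the least element of $\omega \setminus I$ at or above $\lfloor (\varphi(n) + \psi(n))/2 \rfloor$ in $\omega$-order produces $\varphi(n) <_{\mathcal{L}} \chi(n) <_{\mathcal{L}} \psi(n)$ with both $<_{\mathcal{L}}$-distances tending to infinity, placing $[\chi]_C$ in a strictly intermediate chain; analogous constructions with $2\varphi(n)$ and $\lfloor \varphi(n)/2 \rfloor$ witness the absence of a largest and smallest $\mathbb{Z}$-chain. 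Assembling these pieces yields $\Pi_C \mathcal{L} \cong \mathbb{N} + \mathbb{Q} \times \mathbb{Z}$. The main obstacle is to arrange that the entire non-computability of $s_{\mathcal{L}}$ is captured by the c.e.\ set $P$, so that cohesiveness of $C$ can always split the computation of $s_{\mathcal{L}} \circ \varphi$ (and the predecessor, density, and endpoint arguments) into uniformly partial-computable cases.
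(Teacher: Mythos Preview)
Your overall strategy---make the non-computability of $s_{\mathcal{L}}$ live inside a single c.e.\ set so that cohesiveness can split the successor (and predecessor, density) computations into uniformly partial-computable cases---is exactly the idea the paper uses. But the paper realizes it far more directly: it fixes in advance a non-computable c.e.\ set $A$ with computable injective enumeration $f$, orders the even numbers naturally, and puts $2k+1$ between $2f(k)$ and $2f(k)+2$. This gives an explicit $\mathcal{L}$ of type $\omega$ with $a\in A \Leftrightarrow s_{\mathcal{L}}(2a)\neq 2a+2$, and the three-way case split on ``$\psi(n)$ odd / $\psi(n)=2a$ with $a\in A$ / $\psi(n)=2a$ with $a\notin A$'' is then immediate.

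Your diagonalization route has a genuine gap. The claim ``distinct requirements use distinct witnesses and each acts at most once, so $\mathcal{L}$ has order type $\omega$'' does not follow: nothing prevents an infinite cascade of insertions below a fixed element. Concretely, if $R_0$ inserts $s_0$ between $0$ and $1$, then $R_{s_0}$ may insert $s_1$ between $s_0$ and $1$, then $R_{s_1}$ may insert $s_2$ between $s_1$ and $1$, and so on; the only obstruction is whether $\varphi_{s_i}(s_i)=1$ each time, and you give no argument ruling this out. (The paper's Lemma in Section~4, which builds an $\mathcal{L}$ with a \emph{different} purpose, needs exactly a restraint condition ``$n$ is not $<_{\mathcal{L}}$-below any of $0,\dots,e$'' to block this phenomenon.) Relatedly, your formula $s_{\mathcal{L}}(x)=x+1$ for $x\notin P$ is false: if $R_0$ acts at stage $2$ and nothing else does, the order begins $0<_{\mathcal{L}}2<_{\mathcal{L}}1<_{\mathcal{L}}3$, so $s_{\mathcal{L}}(1)=3$ although $1\notin P$. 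The correct successor of an appended $x\notin P$ is the next \emph{appended} element, $\min\{s>x:s\notin I\}$, and for an inserted $x\notin P$ it is the $v$ that $x$ was inserted just below. Your rank formula has the same defect. These are fixable, but the cascade issue is not cosmetic: without a restraint (or a different witness policy) your $\mathcal{L}$ need not have type $\omega$, and then the rest of the argument never gets started. The paper sidesteps all of this by taking the c.e.\ set as given rather than manufacturing it by diagonalization.
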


\begin{proof}
Fix a non-computable c.e.\ set $A$, and let $f$ be a total computable
injection on the set of natural numbers with range $A$. Let $\mathcal{L}%
=(\omega ,<_{\mathcal{L}})$ be the linear order obtained by ordering the
even numbers according to their natural order, and by setting $2a<_{\mathcal{L}}2k+1<_{\mathcal{L}}2a+2$ if and only if $f(k)=a$. Specifically, we set
\begin{align*}
2c& <_{\mathcal{L}}2d & & \leftrightarrow & 2c& <2d \\
2c& <_{\mathcal{L}}2k+1 & & \leftrightarrow & c& \leq f(k) \\
2k+1& <_{\mathcal{L}}2c & & \leftrightarrow & f(k)& <c \\
2k+1& <_{\mathcal{L}}2\ell +1 & & \leftrightarrow & f(k)& <f(\ell ).
\end{align*}%
Then $\mathcal{L}$ is a computable linear order of type $\omega $. Let $S^{%
\mathcal{L}}$ denote the successor function of $\mathcal{L}$. Then $A\leq _{%
\mathrm{T}}S^{\mathcal{L}}$ (indeed, $A\equiv _{\mathrm{T}}S^{\mathcal{L}}$)
because $a\in A$ if and only if $S^{\mathcal{L}}(2a)\neq 2a+2$. Thus $S^{%
\mathcal{L}}$ is not computable.

Let $C$ be cohesive, and let $\mathcal{P}=\Pi _{C}\mathcal{L}$. We show that
$\mathcal{P}\cong \mathbb{N}+\mathbb{Q}\times \mathbb{Z}$. To do this, we
begin by establishing the following properties of $\mathcal{P}$.

\begin{enumerate}[(a)]

\item \label{it:PowOmega} $\mathcal{P}$ has an initial segment of type $%
\omega$.

\item \label{it:PowSucc} Every element of $\mathcal{P}$ has a $<_{\mathcal{P}}$-immediate successor.

\item \label{it:PowPred} Every element of $\mathcal{P}$ that is not the
least element has an $<_{\mathcal{P}}$-immediate predecessor.
\end{enumerate}

For~(\ref{it:PowOmega}), note that the range of the canonical embedding of $%
\mathcal{L}$ into $\mathcal{P}$ is an initial segment of $\mathcal{P}$ of
type $\omega $.

For~(\ref{it:PowSucc}), consider a $[\psi ]\in \mathcal{P}$. We define a
partial computable $\varphi $ such that, for almost every $n\in C$, $\varphi
(n)$ is the $<_{\mathcal{L}}$-immediate successor of $\psi (n)$. It then
follows that $[\varphi ]$ is the $<_{\mathcal{P}}$-immediate successor of $%
[\psi ]$. To define $\varphi $, observe that, by the cohesiveness of $C$,
exactly one of the following three cases occurs.

\begin{enumerate}
\item \label{it:AllOdd} $(\forall^\infty n \in C)(\text{$\psi(n)$ is odd})$

\item \label{it:AllInA} $(\forall^\infty n \in C)(\exists a \in A)(\psi(n) =
2a)$

\item \label{it:AllOutA} $(\forall^\infty n \in C)(\exists a \notin
A)(\psi(n) = 2a)$
\end{enumerate}

Note that we cannot effectively decide which case occurs, but in each case
we can define a particular $\varphi _{i}$ such that $[\varphi _{i}]$ is the $%
<_{\mathcal{P}}$-immediate successor of $[\psi ].$

If case~(\ref{it:AllOdd}) occurs, define
\begin{equation*}
\varphi _{1}(n)=%
\begin{cases}
2a+2 & \text{if $\psi (n){\downarrow }$, $\psi (n)=2k+1$, and $f(k)=a;$} \\
\uparrow & \text{otherwise.}%
\end{cases}%
\end{equation*}

If case~(\ref{it:AllInA}) occurs, define
\begin{equation*}
\varphi _{2}(n)=%
\begin{cases}
2k+1 & \text{if $\psi (n){\downarrow }$, $\psi (n)=2a$, $a\in A$, and $%
f(k)=a;$} \\
\uparrow & \text{otherwise.}%
\end{cases}%
\end{equation*}

If case~(\ref{it:AllOutA}) occurs, define
\begin{equation*}
\varphi _{3}(n)=%
\begin{cases}
2a+2 & \text{if $\psi (n){\downarrow }$ and $\psi (n)=2a;$} \\
\uparrow & \text{otherwise.}%
\end{cases}%
\end{equation*}%
In each case (i) ($i=1,2,3)$ we have that for almost every $n\in C$, $%
\varphi _{i}(n)$ is the $<_{\mathcal{L}}$-immediate successor of $\psi (n)$%
.\medskip

The proof of~(\ref{it:PowPred}) is analogous to the proof of~(\ref%
{it:PowSucc}).\medskip

For $[\psi ],[\varphi ]\in \mathcal{P}$, write $[\psi ]\ll _{\mathcal{P}%
}[\varphi ]$ if $[\psi ]<_{\mathcal{P}}[\varphi ]$ and the interval $([\psi
],[\varphi ])_{\mathcal{P}}$ in $\mathcal{P}$ is infinite. Using the
cohesiveness of $C$, we check that $[\psi ]\ll _{\mathcal{P}}[\varphi ]$ if
and only if $[\psi ]<_{\mathcal{P}}[\varphi ]$ and $\limsup_{n\in C}|(\psi
(n),\varphi (n))_{\mathcal{L}}|=\infty $, where $|(a,b)_{\mathcal{L}}|$
denotes the cardinality of the interval $(a,b)_{\mathcal{L}}$ in $\mathcal{L}
$. Notice that for even numbers $2a$ and $2b$, $2a<_{\mathcal{L}}2b$ if and
only if $2a<2b$. Therefore, if $2a<2b$, then $|(2a,2b)_{\mathcal{L}}|\geq
b-a-1$.

To finish the proof, we show the following.

\begin{enumerate}[(a)] \setcounter{enumi}{3}

\item \label{it:DenseBlocks} If $[\psi], [\varphi] \in \mathcal{P}$ satisfy $%
[\psi] \ll_{\mathcal{P}} [\varphi]$, then there is a $[\theta] \in \mathcal{P%
}$ such that $[\psi] \ll_{\mathcal{P}} [\theta] \ll_{\mathcal{P}} [\varphi]$.

\item \label{it:UnboundedBlocks} If $[\psi ]\in \mathcal{P}$, then there is
a $[\varphi ]\in \mathcal{P}$ with $[\psi ]\ll _{\mathcal{P}}[\varphi ]$%
.\medskip
\end{enumerate}

For~(\ref{it:DenseBlocks}), suppose that $[\psi ],[\varphi ]\in \mathcal{P}$
satisfy $[\psi ]\ll _{\mathcal{P}}[\varphi ]$. By (again) considering the
cases~(\ref{it:AllOdd})--(\ref{it:AllOutA}) above, either $\psi (n)$ is odd
for almost every $n\in C$, or $\psi (n)$ is even for almost every $n\in C$.
In the case where $\psi (n)$ is odd for almost every $n\in C$, $\widehat{%
\psi }(n)$ is even for almost every $n\in C$, where $[\widehat{\psi }]$ is
the $<_{\mathcal{P}}$-immediate successor of $[\psi ]$. Thus we may assume
that $\psi (n)$ and $\varphi (n)$ are even for almost every $n\in C$ by
replacing $[\psi ]$ and $[\varphi ]$ by their $<_{\mathcal{P}}$-immediate
successors if necessary. The condition $\limsup_{n\in C}|(\psi (n),\varphi
(n))_{\mathcal{L}}|=\infty $ is now equivalent to $\limsup_{n\in C}(\varphi
(n)-\psi (n))=\infty $.

Define a partial computable $\theta $ by
\begin{equation*}
\theta (n)=%
\begin{cases}
\left\lfloor \frac{\psi (n)+\varphi (n)}{2}\right\rfloor & \text{if $%
\left\lfloor \frac{\psi (n)+\varphi (n)}{2}\right\rfloor $ is even;} \\
&  \\
\left\lfloor \frac{\psi (n)+\varphi (n)}{2}\right\rfloor +1 & \text{if $%
\left\lfloor \frac{\psi (n)+\varphi (n)}{2}\right\rfloor $ is odd.}%
\end{cases}%
\end{equation*}

By the definition of $\theta $, we have that $\limsup_{n\in C}(\theta
(n)-\psi (n))=\infty $ and that $\limsup_{n\in C}(\varphi (n)-\theta
(n))=\infty $. Since $\psi (n)$, $\varphi (n)$, and $\theta (n)$ are even
for almost all $n\in C$, we have that:
\begin{equation*}
\limsup_{n\in C}|(\psi (n),\theta (n))_{\mathcal{L}}|=\infty \text{ and }%
\limsup_{n\in C}|(\theta (n),\varphi (n))_{\mathcal{L}}|=\infty .
\end{equation*}

~Thus, $[\psi ]\ll _{\mathcal{P}}[\theta ]\ll _{\mathcal{P}}[\varphi ],$ as
desired.\medskip

For~(\ref{it:UnboundedBlocks}), consider $[\psi ]\in \mathcal{P}$. As argued
above, we may assume that $\psi (n)$ is even for almost every $n\in C$ by
replacing $[\psi ]$ by its $<_{\mathcal{P}}$-immediate successor, if
necessary. If $\limsup_{n\in C}\psi (n)$ is finite, then by the cohesiveness
of $C$, the function $\psi $ must be eventually constant on $C$. In this
case, $[\psi ]\ll _{\mathcal{P}}[2\mathrm{id}]$. If $\limsup_{n\in C}\psi
(n)=\infty $, then $[\psi ]\ll _{\mathcal{P}}[2\psi ]$.\medskip

This completes the proof since the properties~(\ref{it:PowOmega})--(\ref%
{it:UnboundedBlocks}) ensure that $\mathcal{P}\cong \mathbb{N}+\mathbb{Q}%
\times \mathbb{Z}$.
\end{proof}

\section{Non-Isomorphic Cohesive Powers of Isomorphic Structures}

\begin{theorem}
For every co-maximal set $C\subseteq \omega $ there exist two isomorphic
computable structures $\mathcal{A}$ and $\mathcal{B}$ such the cohesive
powers $\prod_{C}\mathcal{A}$ and $\prod_{C}\mathcal{B}$ are not isomorphic.
\end{theorem}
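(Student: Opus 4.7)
My plan is to take $\mathcal{A}$ to be the standard computable presentation of $\mathbb{N}$, so that by Theorem~\ref{Properties_LO}(4), $\Pi_{C}\mathcal{A}\cong\mathbb{N}+\mathbb{Q}\times\mathbb{Z}$. I then aim to construct a second computable linear order $\mathcal{B}$ of order type $\omega$, so that $\mathcal{B}\cong\mathcal{A}$ abstractly, but with $\Pi_{C}\mathcal{B}\not\cong\mathbb{N}+\mathbb{Q}\times\mathbb{Z}$. The targeted obstruction is the sentence $\Phi\equiv\forall x\,\exists y\,(x<y\wedge\forall z\,(x<z\to y\le z))$, asserting that every element has an immediate successor. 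This $\Phi$ is $\Pi_{3}^{0}$ and it holds in $\mathcal{B}\cong\omega$ as well as in $\mathbb{N}+\mathbb{Q}\times\mathbb{Z}$; however, Theorem~\ref{FTCP}(3) lifts only $\Pi_{2}^{0}$ and $\Sigma_{2}^{0}$ sentences from $\mathcal{B}$ to $\Pi_{C}\mathcal{B}$, so nothing forbids $\Phi$ from failing in the cohesive power.

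To build $\mathcal{B}$, I would use the hypothesis that $C$ is \emph{co-maximal}, not merely cohesive. Write $M=\overline{C}$ and fix an effective enumeration $M=\bigcup_{s}M_{s}$ with $|M_{s+1}\setminus M_{s}|\le 1$. I would define a computable linear ordering of type $\omega$ on the domain $\omega$ in which elements of $C$ play the role of ``landmarks'' arranged in their natural order, while elements of $M$ are inserted as shims between consecutive landmarks, with the insertion times driven by the enumeration of $M$. The design goal is that the number of $\mathcal{B}$-elements lying strictly between $n$ and the next landmark after $n$ grows without bound along every computable subsequence of $C$. The identity map $\mathrm{id}:\omega\to\omega$ then determines an element $[\mathrm{id}]\in\Pi_{C}\mathcal{B}$ which, under this design, should have no $<_{\Pi_{C}\mathcal{B}}$-immediate successor, witnessing $\Pi_{C}\mathcal{B}\nvDash\Phi$.

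The main obstacle is the diagonalization which verifies this: one must show that for every partial computable $\varphi$ with $C\subseteq^{*}\mathrm{dom}(\varphi)$ and $[\mathrm{id}]<_{\Pi_{C}\mathcal{B}}[\varphi]$, there exists $[\theta]\in\Pi_{C}\mathcal{B}$ with $[\mathrm{id}]<_{\Pi_{C}\mathcal{B}}[\theta]<_{\Pi_{C}\mathcal{B}}[\varphi]$. By cohesiveness of $C$, for each such $\varphi$ the c.e.\ set $\{n:\varphi(n)\text{ is the immediate }<_{\mathcal{B}}\text{-successor of }n\}$ is either almost-all or almost-disjoint from $C$; in the second case the $\mathcal{B}$-successor of $\mathrm{id}$ already serves as a witness $\theta$. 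The delicate case is the first, and this is where maximality of $M$ (rather than just cohesiveness of $C$) enters: the construction must plant, uniformly across all c.e.\ indices $e$ of candidate successors $\varphi_{e}$, a shim element in $M$ strictly between $n$ and $\varphi_{e}(n)$ for almost every $n\in C$, handled by a standard injury-style priority argument in which the $e$-th requirement acts when a fresh element enters $M_{s+1}\setminus M_{s}$. Once this diagonalization succeeds, $\Pi_{C}\mathcal{B}\nvDash\Phi$ while $\Pi_{C}\mathcal{A}\vDash\Phi$, so $\Pi_{C}\mathcal{A}\not\cong\Pi_{C}\mathcal{B}$ even though $\mathcal{A}\cong\mathcal{B}$, completing the proof.
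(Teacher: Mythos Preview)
Your plan is sound and is, in fact, carried out in Section~4 of the paper (Lemma~\ref{lem-AvoidSucc} and the theorem following it): one builds a computable copy $\mathcal{B}$ of $\omega$ in which, for every partial computable $\varphi_e$ and almost every $n\in C$, $\varphi_e(n)$ is not the $\mathcal{B}$-successor of $n$; then $[\mathrm{id}]$ has no immediate successor in $\Pi_C\mathcal{B}$, while $\Pi_C\mathcal{A}\cong\mathbb{N}+\mathbb{Q}\times\mathbb{Z}$ does satisfy the $\Pi_3^0$ successor sentence. Two small corrections to your sketch: the set $\{n:\varphi(n)\text{ is the }\mathcal{B}\text{-successor of }n\}$ is $\Pi_1^0$, not c.e.\ (its complement is c.e., which is what cohesiveness actually uses); and the construction does not need maximality of $\overline{C}$, only that $\overline{C}$ is infinite c.e.\ (so one can reserve an infinite computable $R\subseteq\overline{C}$ as a pool of shims and insert them where the diagonalization demands, rather than where the enumeration of $M$ happens to drop them).

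The paper's own proof of this theorem, however, takes a genuinely different route. It works not with linear orders but with structures $\mathcal{M}_A=(\omega;P)$ for a single ternary relation $P$, where $A\subseteq S$ ($S$ the evens) has infinite c.e.\ complement in $S$. The relation $P$ codes a linear order on $A$ via a Boolean-of-$\Sigma_1^0$ formula $\Phi(x,y)$. Any two such $\mathcal{M}_A$, $\mathcal{M}_{A'}$ are isomorphic, so one takes $A=C$ versus $A=D$ for $D$ computable. In $\Pi_C\mathcal{M}_D$ the definable order has no top (the computable successor on $D$ lifts), while in $\Pi_C\mathcal{M}_C$ Lerman's theorem on recursive functions modulo co-$r$-maximal sets forces every $[g]$ with $g(C)\subseteq^\ast C$ to be either a constant or $[\mathrm{id}]$, so the definable order has type $\omega+1$ with top $[\mathrm{id}]$. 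The distinguishing sentence is thus $\Sigma_3^0$ (``the definable order has a maximum''), and it holds in the power but fails in the base---the opposite direction of sharpness from your $\Pi_3^0$ sentence. The trade-off: the paper's proof avoids any priority construction by invoking Lerman's result as a black box and exploiting the specific set $C$ inside the structure, at the cost of leaving the pure linear-order signature; your approach stays within linear orders (the paper's main theme) but requires the explicit diagonalization.
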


\begin{proof}
Note that it suffices to prove the theorem for an arbitrary co-maximal set
consisting of even numbers only. Indeed, if $C$ is an arbitrary co-maximal
set, then $C_{1}=\{2s\mid s\in C\}$ is also a co-maximal set, and for any
computable structure $\mathcal{M}$, we have $\prod_{C}\mathcal{M}\cong
\prod_{C_{1}}\mathcal{M}$. Then, if $\mathcal{M}_{0}$ and $\mathcal{M}_{1}$
are isomorphic computable structures such that $\prod_{C_{1}}\mathcal{M}%
_{0}\ncong \prod_{C_{1}}\mathcal{M}_{1}$, then $\prod\nolimits_{C}\mathcal{M}%
_{0}\ncong \prod\nolimits_{C}\mathcal{M}_{1}.$

Let $S=\{2s\mid s\in \omega \}$. Let $A\subseteq S$ be such that $A_{1}=S-A$
is infinite and c.e.\ For every such $A$ we will define a computable
structure $\mathcal{M}_{A}$ with a single ternary relation.

Let $F=\{4s+1\mid s\in \omega \}$ and $B=\{4s+3\mid s\in \omega \}$. Fix a
computable bijection $f$ from the set $\{\left\langle i,j\right\rangle \in
S\mid i<j\}$ onto $F$. Let also $b$ be a computable bijection from the set $%
\{\left\langle j,i\right\rangle \in S\mid i<j\wedge (i\in A_{1}\vee j\in
A_{1})\}$ onto $B$. For the function $f$, we write $f_{ij}$ instead of $%
f(i,j)$ and similarly for the function $b$. Define a ternary relation $P$ as
follows:
\begin{eqnarray*}
P &=&\{\left( x,f_{xy},y\right) \mid x,y\in S\wedge x<y\}\cup \\
&&\{\left( y,b_{yx},x\right) \mid x,y\in S\wedge x<y\wedge (x\in A_{1}\vee
y\in A_{1})\}.
\end{eqnarray*}%
Finally, let $\mathcal{M}_{A}=\left\langle \omega ;P\right\rangle $.
Informally, we can view the triples $x,w,y$ with the property $P(x,w,y)$ as
labelled arrows (e.g., $x\overset{w}{\longrightarrow }y)$. We start with a
structure consisting of the set $S\cup F$ with arrows $i\overset{f_{ij}}{%
\longrightarrow }j$, that connect $i$ with $j$ for all $i,j\in S$ such that $%
i<j$. These arrows can be viewed as a way of redefining the natural ordering
$<$ on $S$. Elements of $S$ can be thought of as ``stem elements'' and
elements of $F$ can be thought of as ``forward witnesses.'' Next, we start
enumerating the c.e. set $A_{1}=S-A$. At every stage a new element $k$ is
enumerated into $A_{1}$, we add new arrows together with appropriate
elements from $B$, the ``backward witnesses,'' which intend to exclude $k$
from the initial ordering on $S$. More precisely, we add arrows $k\overset{%
b_{ki}}{\longrightarrow }i$ for all $i$ with $i<k,$ and arrows $j\overset{%
b_{jk}}{\longrightarrow }k$, for each $j$ with $j>k$. Eventually, exactly
the elements of $A_{1}$ will be excluded from the ordering, and the final
ordering will be an ordering on the set $A.$

In the resulting structure, every element $x\in A_{1}$ is connected with
every element $y\in S$ such that $x\neq y$ with exactly two arrows: $x%
\overset{w}{\longrightarrow }y$ and $y\overset{w_{1}}{\longrightarrow }x$.
If $x,y\in A$ are such that $x\neq y$ then they are connected with arrows of
the type $x\overset{w}{\longrightarrow }y$ exactly when $x<y$. In other
words, the formula
\begin{equation*}
\Phi (x,y)=_{def}\exists wP(x,w,y)\wedge \lnot \exists w_{1}P(y,w_{1},x)
\end{equation*}%
will be satisfied by exactly those $x,y\in A$ such that $x<y.$ The formula $%
\Phi $ will not be satisfied by any pair $(x,y)$ for which at least one of $%
x $ or $y$ has been excluded.

The following properties of the structure $\mathcal{M}_{A}$ follow
immediately from the definition above.

(1) For every $w$ there is at most one pair $x,y$ such that $P(x,w,y)$.

(2) If $x\in S-A,$ then for any $y\in S, \ y\not= x,$ there is a unique $%
w_{1}$ such that $P(x,w_{1},y)$ and a unique $w_{2}$ such that $P(y,w_{2},x)$%
.

(3) If $x,y\in A,$ then $x<y\Leftrightarrow \exists wP(x,w,y)$.

(4) $\mathcal{M}_{A}$ is computable.

To prove (4) note that the relation $P$ is computable because
\begin{equation*}
P(x,z,y)\Leftrightarrow x,y\in S\wedge \left[ (x<y\wedge z=f_{xy})\vee
(x>y\wedge z\in B\wedge b^{-1}(z)=\left\langle x,y\right\rangle )\right] .
\end{equation*}

(5) Let $D$, $E\subseteq S$ be infinite and such that $S-D$ and $S-E$ are
infinite and c.e. Then $\mathcal{M}_{D}\cong \mathcal{M}_{E}$.

Since $D$ and $E$ are infinite, the orders $\left( D,<\right) $ and $\left(
E,<\right) $, where $<$ is the natural order, are isomorphic to $\mathbb{N}$%
. The isomorphism between these orders, extended by any bijection between $%
S-D$ and $S-E,$ has a unique natural extension to a map from the domain of $%
\mathcal{M}_{D}$ to the domain of $\mathcal{M}_{E}$. That is, the arrows in $%
\mathcal{M}_{D}$ (the elements of $F$ and $B)$ can be uniquely mapped to
corresponding arrows in $\mathcal{M}_{E}$.\smallskip

To continue with the proof, we let
\begin{equation*}
\Theta (x)=_{def}\left( \exists t\right) \left[ \Phi (x,t)\vee \Phi (t,x)%
\right] .
\end{equation*}
The formula $\Theta (x)$ defines the set $A$ in $\mathcal{M}_{A}.$

For any structure $\mathcal{M}\mathfrak{=}\left( M,P\right) $ in the
language with one ternary predicate symbol we will use the following
notation:

$L_{\mathcal{M}}=_{def}\left\{ x\in M|\mathcal{M}\vDash \Theta (x)\right\} ,$%
and

$<_{L_{\mathcal{M}}}=_{def}\left\{ \left( x,y\right) \in M\times M|\mathcal{M%
}\vDash \Phi (x,y)\right\} .$

Fix $A\subseteq S$ such that $S-A$ is infinite and c.e.

It follows from the discussion above that the formula $\Phi (x,y)$ defines
in $\mathcal{M}_{A}$ the restriction of the natural order $<$ to $A$.
Clearly, $\left( L_{\mathcal{M}_{A}},<_{L_{\mathcal{M}_{A}}}\right) $ has
order type $\omega $.\medskip

Let $\mathcal{M}_{A}^{\sharp }=\prod_{C}\mathcal{M}_{A}.$ For partial
computable functions $g$ and $h$ such that $\left[ g\right] ,\left[ h\right]
\in \mathrm{dom}(\mathcal{M}_{A}^{\sharp })$ we have:

(i) $\mathcal{M}_{A}^{\sharp }\models \Phi (\left[ g\right] ,\left[ h\right]
)\Leftrightarrow C\subseteq ^{\ast }\{i|\left( g(i)\in A\right) \wedge
\left( h(i)\in A\right) \wedge \left( g(i)<h(i)\right) \}$

(ii) $L_{\mathcal{M}_{A}^{\sharp }}=\{\left[ g\right] \in \mathcal{M}%
_{A}^{\sharp }|$ $g(C)\subseteq ^{\ast }A\}$ and $\left( L_{\mathcal{M}%
_{A}^{\sharp }},<_{L_{\mathcal{M}_{A}^{\sharp }}}\right) $ is a linear order$%
.$

Note that (i) follows from Theorem \ref{FTCP}, part (2), since $\Phi (x,y)$
is a Boolean combination of $\Sigma _{1}^{0}$ and $\Pi _{1}^{0}$ formulas.

For the proof of (ii) notice that for any $\left[ g\right] \in \mathcal{M}%
_{A}^{\sharp }$ we have either $C\subseteq ^{\ast }\{i|g(i)\in A\}$ or $%
C\subseteq ^{\ast }\{i|g(i)\in \omega -A\}$ because $C$ is cohesive and $%
\omega -A$ is c.e. Since
\begin{equation*}
\left[ g\right] \in L_{\mathcal{M}_{A}^{\sharp }}\Leftrightarrow \left(
\exists x\right) \left[ \Phi (\left[ g\right] ,x)\vee \Phi (x,\left[ g\right]
)\right] ,
\end{equation*}%
the equivalence in part (i) implies that $L_{\mathcal{M}_{A}^{\sharp }}=\{%
\left[ g\right] \in \mathcal{M}_{A}^{\sharp }|$ $g(C)\subseteq ^{\ast }A\}.$
It is easy to show that the relation $<_{L_{\mathcal{M}_{A}^{\sharp }}}$ is
a linear order on $L_{\mathcal{M}_{A}^{\sharp }}$.

For any $a\in A$ let $h_{a}(i)=a$ for all $i\in \omega $. We will call the
element $\left[ h_{a}\right] $ in $\mathcal{M}_{A}^{\sharp }$ a constant in $%
\mathcal{M}_{A}^{\sharp }$.

(6) The set of constants $\{\left[ h_{a}\right] |a\in A\}$ in the structure $%
\mathcal{M}_{A}^{\sharp }$ forms an initial segment of $\left( L_{\mathcal{M}%
_{A}^{\sharp }},<_{L_{\mathcal{M}_{A}^{\sharp }}}\right) $ of order type $%
\omega $.

Clearly, if $a_{0},a_{1}\in A,$ then $\Phi (\left[ h_{a_{0}}\right] ,\left[
h_{a_{1}}\right] )$ if and only if $a_{0}<a_{1}$. Therefore, $\{\left[ h_{a}%
\right] |a\in A\}$\ is an ordered set of type $\omega $. It remains to check
that $\{\left[ h_{a}\right] |a\in A\}$ is an initial segment. Suppose $\left[
h\right] \in \mathcal{M}_{A}^{\sharp }$ and $a\in A$ are such that $\mathcal{%
M}_{A}^{\sharp }\vDash \Phi (\left[ h\right] ,\left[ h_{a}\right] ).$ Then
\begin{equation*}
C\subseteq ^{\ast }\{i|\mathcal{M}_{A}\vDash \Phi (h(i),a)\}=\{i|h(i)\in
A\wedge h(i)<a\}=\bigcup_{k\in A\wedge k<a}\{i|h(i)=k\}.
\end{equation*}%
The last expression is a union of a finite family of mutually disjoint c.e.
sets. Since $C$ is cohesive, there exists a $k\in A$ such that $C\subseteq
^{\ast }\{i|h(i)=k\}$, which means that $\left[ h\right] =\left[ h_{k}\right]
$ is a constant.

We now define the following $\Sigma _{3}^{0}$ sentence
\begin{equation*}
\Psi =_{def}\left( \exists x\right) \left[ \Theta (x)\wedge \left( \forall
y\right) \left[ \Theta (y)\Rightarrow \Phi (y,x)\right] \right] .
\end{equation*}%
The intended interpretation of $\Psi $ is that when $\Phi (x,t)$ defines a
linear order $\left( L_{\mathcal{M}},<_{L_{\mathcal{M}}}\right) ,$ then the
order has a greatest element$.$ Note that $\mathcal{M}_{A}\vDash \urcorner
\Psi .$ This is because $\left( L_{\mathcal{M}_{A}},<_{L_{\mathcal{M}%
_{A}}}\right) $ has order type $\omega $ and hence has no greatest element.

Before we continue with the proof we recall Proposition 2.1 from \cite{L}.

\begin{proposition}
\label{Alternative} (Lerman \cite{L}) Let $R$ be a co-$r$-maximal set, and
let $f$ be a computable function such that $f(R)\cap R$ is infinite. Then
the restriction $f\upharpoonright R$ differs from the identity function only
finitely.
\end{proposition}

We now fix a co-maximal (hence co-$r$-maximal) set $C\subseteq S$ and an
infinite co-infinite computable set $D\subseteq S$. By property (5) above,
we have $\mathcal{M}_{C}\cong \mathcal{M}_{D}$. Let $\mathcal{M}_{C}^{\sharp
}=\prod_{C}\mathcal{M}_{C}$ and $\mathcal{M}_{D}^{\sharp }=\prod_{C}\mathcal{%
M}_{D}.$

It is not hard to show that, since $C$ is co-maximal, for every partial
computable function $\varphi $ for which $C\subseteq ^{\ast }dom(\varphi )$,
there is a computable function $f_{\varphi }$ such that $[\varphi
]=[f_{\varphi }]$ (see \cite{DHMM}).

To finish the proof we will establish the following facts:

(7) $\mathcal{M}_{C}^{\sharp }\vDash \Psi $

(8) $\mathcal{M}_{D}^{\sharp }\vDash \urcorner \Psi $

To prove (7) recall that $L_{\mathcal{M}_{C}^{\sharp }}=\{\left[ f\right]
\in \mathcal{M}_{C}^{\sharp }|$ $f(C)\subseteq ^{\ast }C\}.$ By Proposition %
\ref{Alternative} if $\left[ f\right] \in \mathcal{M}_{C}^{\sharp }$ is such
that $f(C)\subseteq ^{\ast }C$ and $f(C)$ is infinite, then $\left[ f\right]
=\left[ \mathrm{id}\right] .$ If $f(C)$ is finite, then $f$ is eventually a
constant on $C,$ because $C$ is cohesive. Therefore, $L_{\mathcal{M}%
_{C}^{\sharp }}=\{\left[ f_{c}\right] \mid c\in C\}\cup \{\left[ \mathrm{id}%
\right] \}.$ It is easy to see that if $c\in C,$ then $\Phi (\left[ f_{c}%
\right] ,\left[ id\right] ).$ Thus, $\left( L_{\mathcal{M}_{C}^{\sharp
}},<_{L_{\mathcal{M}_{C}^{\sharp }}}\right) $ has order type $\omega +1$
with the greatest element $\left[ id\right] $. Therefore, $\mathcal{%
M}_{C}^{\sharp }\vDash \Psi .$

To prove (8), let $D=\{d_{0}<d_{1}<\cdots \}$. The function $g$ defined as $%
g(d_{i})=d_{i+1}\ $is computable. Suppose that $\mathcal{M}_{D}^{\sharp
}\vDash \Psi $ and let $\left[ f\right] $ be the greatest element in $\left(
L_{\mathcal{M}_{D}^{\sharp }},<_{L_{\mathcal{M}_{D}^{\sharp }}}\right) .$
Since $\left[ f\right] <_{L_{\mathcal{M}_{D}^{\sharp }}}\left[ g\circ f%
\right] ,$ it follows that $\mathcal{M}_{D}^{\ast }\vDash \urcorner \Psi .$%
\smallskip

In conclusion, we defined computable isomorphic structures $\mathcal{M}_{C}$
and $\mathcal{M}_{D}$ such that $\prod_{C}\mathcal{M}_{C}$ and $\prod_{C}%
\mathcal{M}_{D}$ are not even elementary equivalent. The structure $\mathcal{%
M}_{C}$ also provides a sharp bound for the fundamental theorem of cohesive
powers. Namely, for the $\Sigma _{3}^{0}$ sentence $\Psi ,$ $\mathcal{M}%
_{C}\vDash \urcorner \Psi $ but $\prod_{C}\mathcal{M}_{C}\vDash \Psi $.
\end{proof}

\section{Orders of type $\protect\omega $ with cohesive powers not
isomorphic to $\mathbb{N}+\mathbb{Q}\times \mathbb{Z}$}

We prove that if $C$ is co-maximal, then there is a computable linear order $%
\mathcal{L}$ of type $\omega $ (necessarily with a non-computable successor
function) such that $\Pi _{C}\mathcal{L}\ncong \mathbb{N}+\mathbb{Q}\times
\mathbb{Z}$.

\begin{lemma}
\label{lem-AvoidSucc} Let $C\subseteq \omega $ be co-c.e., infinite, and
co-infinite. Then there is a computable linear order $\mathcal{L}=(\omega
,<_{\mathcal{L}})$ of type $\omega $ such that for every partial computable
function $\varphi $,
\begin{equation}
\forall ^{\infty }n\in C(\varphi (n){\downarrow }\Rightarrow \text{$\varphi
(n)$ is not the $\mathcal{L}$-immediate successor of $n$}).  \tag{*}
\label{no_succ}
\end{equation}
\end{lemma}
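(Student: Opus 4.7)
My plan is to build $\mathcal{L}$ in stages by an infinite-injury priority construction against the requirements
\[
R_e : \{n \in C : \varphi_e(n){\downarrow} = S^{\mathcal{L}}(n)\} \text{ is finite} \qquad (e \in \omega),
\]
in priority order $R_0, R_1, R_2, \dots$. Fix a computable enumeration $\overline{C}_s \nearrow \overline{C}$. At stage $s$, $\mathcal{L}_s$ will be a finite linear order on $\{0, 1, \dots, s\}$; passing to $\mathcal{L}_{s+1}$ amounts to inserting the new integer $s+1$ somewhere, and previously placed elements are never reordered, only pushed upward by insertions.

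At stage $s+1$ I would search for a pair $(e, n)$ constituting an active threat: $e \le s$; $n \in \{0, \dots, s\} \setminus \overline{C}_s$; $\varphi_{e,s}(n){\downarrow} = S^{\mathcal{L}_s}(n)$; the $\mathcal{L}_s$-rank of $n$ is at least $e$ (the priority restriction); and $(e, n)$ has not been acted upon at any earlier stage. If such a pair exists, I would take the one with smallest $e$ and then smallest $n$, insert the fresh integer $s+1$ strictly between $n$ and $S^{\mathcal{L}_s}(n)$ in $\mathcal{L}_{s+1}$, and mark $(e, n)$ acted upon; otherwise I would append $s+1$ at the top of $\mathcal{L}_s$. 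The use of the fresh integer $s+1$ for an insertion guarantees that the new immediate successor of $n$ in $\mathcal{L}_{s+1}$ is $s+1 \ne \varphi_e(n)$, breaking the coincidence for good.

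The verifications are: (i) $\mathcal{L}$ is computable, which is automatic; (ii) each $R_e$ is satisfied; and (iii) $\mathcal{L}$ has order type $\omega$. For (ii): suppose $n \in C$ has final $\mathcal{L}$-rank at least $e$ and $\varphi_e(n){\downarrow} = v$. From some stage onward either $(e, n)$ is a candidate visible to $R_e$, or some higher-priority requirement has already moved the immediate successor of $n$ off $v$; fair processing eventually reaches $(e, n)$ (or a higher-priority predecessor) and ensures the final successor of $n$ is not $v$. Hence $R_e$-failures are confined to those $n \in C$ whose final rank is less than $e$, a set of size at most $e$. The main obstacle will be (iii): $a \in \omega$ has $r_\infty(a) < \infty$ iff only finitely many insertions are ever made below it, and the fact that requirements can act infinitely often in total makes this delicate. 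The priority restriction is designed precisely for this: only $R_e$ with $e < r_\infty(a)$ can ever insert below $a$, and each such $R_e$ acts on each witness $n <_{\mathcal{L}} a$ at most once; an inductive bookkeeping on rank, together with the observation that elements enumerated into $\overline{C}_s$ are automatically exempt from generating new threats on themselves (so they serve as harmless ``padding'' for the upward growth), should force $r_\infty(a) < \infty$ for every $a$.
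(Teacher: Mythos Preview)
Your construction is close in spirit to the paper's, but you miss one device that makes the paper's argument clean. The paper first fixes an infinite \emph{computable} set $R\subseteq\overline{C}$ and uses $R$ as a reservoir: every element inserted to break a successor relation is chosen from $R$, and the witness condition includes $n\notin R$. Thus an inserted element can never itself become a future witness. The order-type-$\omega$ verification is then short: once $k$ has entered the order, the only non-$R$ points $<_{\mathcal L}k$ are already present and fixed; each pairs with only finitely many $e$ (the paper's restraint is ``$n$ is not $<_{\mathcal L}$-below any of the natural numbers $0,1,\dots,e$'', which forces $e<k$), and each such pair acts at most once. Your construction instead inserts the fresh integer $s{+}1$, which may lie in $C$ and hence may itself spawn new threats; your remark that ``elements enumerated into $\overline{C}_s$ serve as harmless padding'' does not apply to these insertions. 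Your rank induction for (iii) does still go through --- once rank $k-1$ settles at $a_{k-1}$, only the pairs $(e,a_{k-1})$ with $e\le k-1$ can insert at rank $k$, and each acts at most once --- but the write-up should say exactly this instead of gesturing at $\overline{C}_s$.

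There is also a genuine gap in your argument for (ii). Under the lexicographic priority ``smallest $e$, then smallest $n$'', a fixed pair $(e,n)$ has \emph{infinitely many} higher-priority competitors $(e',n')$ with $e'<e$, so ``fair processing eventually reaches $(e,n)$'' is not automatic; a single $R_{0}$ could in principle find a fresh candidate at every stage. The correct justification is indirect and \emph{uses} (iii): if $(e,n)$ remained a candidate at every late stage, you would insert (never append) at every late stage, so the current top element would acquire unbounded rank, contradicting the already-proved order type $\omega$. You should reorder the verification so that (iii) precedes (ii) and make this dependence explicit. The paper avoids this issue entirely by acting on \emph{all} eligible $\langle e,n\rangle<s$ at stage $s$, which it can afford precisely because the reservoir $R$ supplies an unlimited stock of harmless points for simultaneous insertions.
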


\begin{proof}
Fix an infinite computable set $R\subseteq \overline{C}$. We define $<_{%
\mathcal{L}}$ in stages. By the end of stage $s$, $<_{\mathcal{L}}$ will
have been defined on $X_{s}\times X_{s}$ for some finite $X_{s}\supseteq
\{0,1,\dots ,s\}$. At stage $0$, set $X_{0}=\{0\}$ and define $0\nless _{%
\mathcal{L}}0$. At stage $s>0$, start with $X_{s}=X_{s-1}$ and update $X_{s}$
and $<_{\mathcal{L}}$ according to the following procedure.

\begin{enumerate}
\item If $<_{\mathcal{L}}$ has not yet been defined on $s$ (i.e., if $s
\notin X_{s}$), then update $X_{s}$ to $X_{s} \cup \{s\}$ and extend $<_{%
\mathcal{L}}$ to make $s$ the $<_{\mathcal{L}}$-greatest element of $X_{s}$.

\item \label{it:BreakSucc} Consider each $\langle e,n\rangle <s$ in order. If

\begin{enumerate}
\item $\varphi_{e,s}(n){\downarrow} \in X_{s}$,

\item $\varphi_{e}(n)$ is currently the $<_{\mathcal{L}}$-immediate
successor of $n$ in $X_{s}$,

\item \label{it:notR} $n \notin R$, and

\item \label{it:restraint} $n$ is not $<_{\mathcal{L}}$-below any of $0, 1,
\dots, e$,
\end{enumerate}

then let $m$ be the least element of $R-X_{s}$. Update $X_{s}$ to $X_{s}\cup
\{m\}$, and extend $<_{\mathcal{L}}$ so that $n<_{\mathcal{L}}m<_{\mathcal{L}%
}\varphi _{e}(n)$.
\end{enumerate}

This completes the construction.

We claim that for every $k$, there are only finitely many elements $<_{%
\mathcal{L}}$-below $k$. It follows that $\mathcal{L}$ is of type $\omega $.
Say that $\varphi _{e}$ \emph{acts for $n$ and adds $m$} when $<_{\mathcal{L}%
}$ is defined on an $m\in R$ to make $n<_{\mathcal{L}}m<_{\mathcal{L}%
}\varphi _{e}(n)$ as in~(\ref{it:BreakSucc}). Let $s_{0}$ be a stage with $%
k\in X_{s_{0}}$. Suppose at stage $s>s_{0}$ we add an $m$ to $X_{s}$ and
define $m<_{\mathcal{L}}k$. This can only be due to a $\varphi _{e}$ acting
for an $n\notin R$ and adding $m$ at stage $s$. At stage $s$, we must have $%
n<_{\mathcal{L}}k$ because $n<_{\mathcal{L}}m<_{\mathcal{L}}k$. Therefore,
we must also have $e<k$, for otherwise $k$ would be among $0,1,\dots e$, and
condition~(\ref{it:restraint}) would prevent the action of $\varphi _{e}$.
Furthermore, $m$ is chosen so that $m\in R$ and thus only elements of $R$
are added $<_{\mathcal{L}}$-below $k$ after stage $s_{0}$. Hence an $m$ can
only be added $<_{\mathcal{L}}$-below $k$ after stage $s_{0}$ when a $%
\varphi _{e}$ with $e<k$ acts for an $n<_{\mathcal{L}}k$ with $n\notin R$.
Each $\varphi _{e}$ acts at most once for every $n$, and no new $n\notin R$
appears $<_{\mathcal{L}}$-below $k$ after stage $s_{0}$. Thus, after stage $%
s_{0}$, only finitely many $m$ are ever added $<_{\mathcal{L}}$-below $k$.

We claim that for every $e$, (\ref{no_succ}) holds. Given $e$, let $\ell $
be the $<_{\mathcal{L}}$-greatest element of $\{0,1,\dots ,e\}$. Suppose
that $n>_{\mathcal{L}}\ell $ and $n\in C$. If $\varphi _{e}(n){\downarrow }$%
, let $s$ be large enough so that $\langle e,n\rangle <s$, $\varphi _{e,s}(n)%
{\downarrow }$, $n\in X_{s}$, and $\varphi _{e}(n)\in X_{s}$. Then either $%
\varphi _{e}(n)$ is already not the $\mathcal{L}$-immediate successor of $n$
at stage $s+1$, or at stage $s+1$ the conditions of~(\ref{it:BreakSucc}) are
satisfied for $\langle e,n\rangle $, and an $m$ is added such that $n<_{%
\mathcal{L}}m<_{\mathcal{L}}\varphi _{e}(n)$.
\end{proof}

\begin{theorem}
Let $C$ be a co-maximal set. Then there is a computable linear order $%
\mathcal{L}$ of type $\omega $ such that $[\mathrm{id}]$ does not have a
successor in $\Pi _{C}\mathcal{L}$. Therefore, $\Pi _{C}\mathcal{L}\ncong
\mathbb{N}+\mathbb{Q}\times \mathbb{Z}$.
\end{theorem}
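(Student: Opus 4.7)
The plan is a direct application of Lemma~\ref{lem-AvoidSucc}. Since $C$ is co-maximal, its complement is an (infinite) maximal set, so $C$ is co-c.e., infinite, and co-infinite, and the hypotheses of Lemma~\ref{lem-AvoidSucc} are satisfied. Let $\mathcal{L}$ be the computable linear order of type $\omega$ provided by the lemma, so that $\mathcal{L}$ satisfies property~$(*)$: for every partial computable $\varphi$, $\varphi(n)$ fails to be the $<_{\mathcal{L}}$-immediate successor of $n$ for almost every $n \in C$ on which $\varphi$ converges. All that remains is to show that $[\mathrm{id}]$ has no immediate successor in $\Pi_C \mathcal{L}$; since every element of $\mathbb{N}+\mathbb{Q}\times\mathbb{Z}$ has an immediate successor, this will give $\Pi_C\mathcal{L}\not\cong\mathbb{N}+\mathbb{Q}\times\mathbb{Z}$.

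Suppose toward a contradiction that $[\psi]$ is the $<_{\Pi_C \mathcal{L}}$-immediate successor of $[\mathrm{id}]$, where $\psi$ is a partial computable function with $C \subseteq^{*} \mathrm{dom}(\psi)$. From $[\mathrm{id}] <_{\Pi_C \mathcal{L}} [\psi]$ we have $n <_{\mathcal{L}} \psi(n)$ for almost every $n \in C$, and from property~$(*)$ applied to $\psi$ we get that $\psi(n)$ is not the $<_{\mathcal{L}}$-immediate successor of $n$ for almost every $n \in C$. Combining these two facts, for almost every $n \in C$ there is some $k \in \omega$ with $n <_{\mathcal{L}} k <_{\mathcal{L}} \psi(n)$.

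I would then produce an intermediate element of $\Pi_C \mathcal{L}$. Define a partial computable $\theta$ as follows: on input $n$, wait for $\psi(n)$ to converge, then search through $k = 0, 1, 2, \dots$ and output the first $k$ with $n <_{\mathcal{L}} k <_{\mathcal{L}} \psi(n)$. Since $\mathcal{L}$ has order type $\omega$ and $<_{\mathcal{L}}$ is computable, only finitely many $k$ lie $<_{\mathcal{L}}$-below $\psi(n)$, so whenever an intermediate $k$ exists it is found effectively. By the previous paragraph, $\theta$ converges on almost all of $C$ and $n <_{\mathcal{L}} \theta(n) <_{\mathcal{L}} \psi(n)$ there, yielding $[\mathrm{id}] <_{\Pi_C \mathcal{L}} [\theta] <_{\Pi_C \mathcal{L}} [\psi]$, which contradicts the assumption that $[\psi]$ is the immediate successor of $[\mathrm{id}]$. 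The real work is already inside Lemma~\ref{lem-AvoidSucc}; the only subtlety I would highlight is that $\theta$ should not attempt to compute the (non-computable) $<_{\mathcal{L}}$-successor function but only to find \emph{some} element strictly between $n$ and $\psi(n)$, which is permitted by the type-$\omega$-ness of $\mathcal{L}$.
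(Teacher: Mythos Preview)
Your proof is correct and follows essentially the same approach as the paper: apply Lemma~\ref{lem-AvoidSucc} to obtain $\mathcal{L}$, then for any $[\psi]$ above $[\mathrm{id}]$ use $(*)$ to guarantee an intermediate value and define a partial computable function picking one, thereby witnessing that $[\psi]$ is not the immediate successor. The only cosmetic differences are that the paper phrases the argument directly rather than by contradiction and describes the intermediate function as returning ``the least $m$'' rather than ``the first $k$ found,'' but these amount to the same thing.
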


\begin{proof}
Let $\mathcal{L}$ be a computable linear order as in Lemma~\ref%
{lem-AvoidSucc} for $C$. Suppose that $\varphi $ is a partial computable
function such that $[\mathrm{id}]<_{\Pi _{C}\mathcal{L}}[\varphi ]$. We show
that $[\varphi ]$ is not the $<_{\Pi _{C}\mathcal{L}}$-immediate successor
of $[\mathrm{id}]$. The inequality $[\mathrm{id}]<_{\Pi _{C}\mathcal{L}%
}[\varphi ]$ means that $\left( \forall ^{\infty }n\in C\right) (n<_{%
\mathcal{L}}\varphi (n))$. However, by Lemma~\ref{lem-AvoidSucc},
\begin{equation*}
\left( \forall ^{\infty }n\in C\right) (\text{$\varphi (n)$ is not the $%
\mathcal{L}$-immediate successor of $n$}).
\end{equation*}%
Define a partial computable $\psi $ so that, for every $n$,
\begin{equation*}
\psi (n)=%
\begin{cases}
\text{the least $m$ such that $n<_{\mathcal{L}}m<_{\mathcal{L}}\varphi (n)$}
& \text{if there is such an $m;$} \\
\uparrow & \text{otherwise}.%
\end{cases}%
\end{equation*}%
Then $\left( \forall ^{\infty }n\in C\right) (n<_{\mathcal{L}}\psi (n)<_{%
\mathcal{L}}\varphi (n))$. Thus, $[\mathrm{id}]<_{\Pi _{C}\mathcal{L}}[\psi
]<_{\Pi _{C}\mathcal{L}}[\varphi ]$. So, $[\varphi ]$ is not the $<_{\Pi _{C}%
\mathcal{L}}$-immediate successor of $[\mathrm{id}]$.

It follows that $\Pi _{C}\mathcal{L}\ncong \mathbb{N}+\mathbb{Q}\times
\mathbb{Z}$ because every element of $\mathbb{N}+\mathbb{Q}\times \mathbb{Z}$
has an immediate successor, but $[\mathrm{id}]\in \Pi _{C}\mathcal{L}$ does
not have an immediate successor.
\end{proof}

Note that the sentence $\Psi $ that states that every element has an
immediate successor is $\Pi _{3}^{0}$. Then for the computable linear order $%
\mathcal{L}$ of type $\omega $ constructed above, $\mathcal{L}\vDash \Psi $
but $\Pi _{C}\mathcal{L}\vDash \urcorner \Psi .$

\end{document}